\newtheorem{thm}{Theorem}[section]
\newtheorem{prop}[thm]{Proposition}
\newtheorem{cor}[thm]{Corollary}
\newtheorem{fact}[thm]{Fact}
\newtheorem{prob}[thm]{Problem}
\theoremstyle{definition}
\newtheorem{defn}[thm]{Definition}
\newtheorem{example}[thm]{Example}
\newtheorem{remark}[thm]{Remark}
\newcommand{\cl}[1]{\ensuremath{\overline{{#1}}}}
\newcommand{\map}[3]{\ensuremath{{#1}:{#2}\to{#3}}}
\newcommand{\N}{\mathbb{N}}
\newcommand{\n}[1]{\ensuremath{\left\|{#1}\right\|}}
\newcommand{\ndot}{\ensuremath{\left\|\cdot\right\|}}
\newcommand{\pn}[2]{\ensuremath{\left\|{#1}\right\|_{#2}}}
\newcommand{\pndot}[1]{\ensuremath{\left\|\cdot\right\|_{#1}}}
\newcommand{\set}[2]{\ensuremath{\left\{{#1}\;:\;\,{#2}\right\}}}
\newcommand{\ts}{\textstyle}
\newcommand{\tn}[1]{\ensuremath{\tri{#1}\tri}}
\newcommand{\tndot}{\ensuremath{\tri\cdot\tri}}
\newcommand{\tri}{{\displaystyle |\kern-.9pt|\kern-.9pt|}}
\DeclareMathOperator{\supp}{supp}
\DeclareMathOperator{\lspan}{span}
\newcommand{\st}{($*$)}
\title{Polyhedrality and decomposition}
\author[T.~A.~Abrahamsen]{Trond A.~Abrahamsen}
\address{Department of Mathematics, University of Agder, Postboks 422,
  4604 Kristiansand, Norway}
\email{trond.a.abrahamsen@uia.no}
\urladdr{http://home.uia.no/trondaa/index.php3}
\author[V.~P.~Fonf]{Vladimir P.~Fonf}
\address{Department of Mathematics, Ben-Gurion University of the
  Negev, 84105 Beer-Sheva, Israel}
\email{fonf@math.bgu.ac.il}
\author[R.~J.~Smith]{Richard J.~Smith}
\address{School of Mathematics and Statistics, University College
  Dublin, Belfield, Dublin 4, Ireland} \email{richard.smith@maths.ucd.ie}
\urladdr{http://mathsci.ucd.ie/~rsmith}
\author[S.~Troyanski]{Stanimir Troyanski}
\address{Institute of Mathematics and Informatics, Bulgarian Academy
  of Science, bl.8, acad. G.~Bonchev str.~1113 Sofia, Bulgaria and
  Departamento de Matem\'aticas, Universidad de Murcia, Campus de
  Espinardo, 30100 Espinardo (Murcia), Spain}
\email{stroya@um.es}
\thanks{The second author was financially supported by GACR 16-073785
  and RVO:~67985840. The fifth author was partially supported by
  MTM2014-54182-P (MINECO/FEDER), MTM2017-86182-P (AEI/FEDER, UE) and
  the Bulgarian National Scientific Fund under Grant DFNI-I02/10.}
\keywords {polyhedrality}
\subjclass[2010]{Primary:~46B03, 46B20, 46B26}
\begin{document}

\begin{abstract}
The aim of this note is to present two results that make the task of
finding equivalent polyhedral norms on certain Banach spaces, having
either a Schauder basis or an uncountable unconditional basis, easier
and more transparent. The hypotheses of both results are based on
decomposing the unit sphere of a Banach space into countably many
pieces, such that each one satisfies certain properties. Some examples
of spaces having equivalent polyhedral norms are given.
\end{abstract}

\maketitle

\section{Introduction}\label{sect_introduction}

The concepts of upper and lower $p$-estimates (for disjoint elements)
in Banach lattices, where $1 < p < \infty$, play an important role
when studying the geometry of Banach spaces. More precisely, using
their relationship with $p$-convexity and concavity, it is possible to
find asymptotically sharp estimates at $0$ of the moduli of convexity
and smoothness, and the cotype and type of the Banach lattice (see
e.g.~\cite[Chapter 1]{lt:79}). We introduce an analogue of upper
$p$-estimate in the case $p = \infty$, and in doing so we find
sufficient conditions for isomorphic polyhedral renorming. In our
opinion, these conditions are easier to verify in many concrete
cases. Let us recall that, following V.~Klee \cite{k:59}, a Banach
space is said to be \emph{polyhedral} when the unit balls of all of
its finite-dimensional subspaces are polytopes. A Banach space $X$ is
said to be \emph{isomorphically polyhedral} if it is isomorphic to a
polyhedral space or, equivalently, if $X$ admits an equivalent
polyhedral norm.

We denote by $B_X$ and $S_X$ the (closed) unit ball and unit sphere of
$X$, respectively. Let $X$ have an unconditional basis
$(e_\gamma)_{\gamma \in \Gamma}$, with corresponding biorthogonal
functionals $(e_\gamma^*)_{\gamma\in\Gamma}$. Given a subset $A
\subseteq \Gamma$, we define the projections
\[
P_A x \;=\; \sum_{\gamma \in A} e^*_\gamma(x)e_\gamma
\qquad\text{and}\qquad R_A x \;=\; x-P_A x.
\]
If $(e_j)_{j=1}^\infty$ is a Schauder basis (with corresponding
biorthogonals $(e_j^*)_{j=1}^\infty$), define $P_n=P_{\{1,\dots,n\}}$
and $R_n=R_{\{1,\dots,n\}}$. From time to time we will require the
\emph{support} of an element in $X$ or its dual, with respect to the
given basis:~define
\[
\supp(x) \;=\; \set{\gamma \in \Gamma}{e^*_\gamma(x) \neq 0},
\]
for all $x \in X$ and, given $f \in X^*$, set
\[
\supp(f) \;=\; \set{\gamma \in \Gamma}{f(e_\gamma) \neq 0}.
\]
We will also require a type of function known in approximation theory
as a \emph{modulus}, namely a non-decreasing continuous function
$\map{\omega}{[0,\infty)}{[0,\infty)}$ such that $\omega(0)=0$. We
present our chief definition.

\begin{defn}\label{defn_star} We say that the Banach space $X$ has
  decomposition {\st} (with respect to the unconditional basis
  $(e_\gamma)_{\gamma \in \Gamma}$ and modulus $\omega$) if, for every
  $x \in X$ there exist positive numbers $c(x)$ and $d(x)$, such that
  the inequality
\[
\n{x} \;\leqslant\; \n{P_A x} + c(x)\omega(d(x)\pn{R_A x}{\infty}),\tag*{\st}
\]
holds for every subset $A \subseteq \Gamma$. Here, $\pndot{\infty}$
denotes the \emph{supremum norm} on $X$, i.e.
\[
\pn{x}{\infty} \;=\; \max\set{|e_\gamma^*(x)|}{\gamma \in \Gamma}.
\]
\end{defn}

\begin{remark}\label{rem_star} It is enough that {\st} holds only for
  all $x \in S_X$. Given $x \neq 0$, we can set
\[
c(x) \;=\; \n{x}\cdot c\left(\frac{x}{\n{x}}\right)
\qquad\text{and}\qquad d(x) \;=\; \frac{1}{\n{x}}\cdot
d\left(\frac{x}{\n{x}} \right).
\]
Clearly {\st} holds for $x$ if it holds for $x/\n{x}$.
\end{remark}

Now we present our two main results.

\begin{thm}\label{thm_symmetric} Let a Banach space $X$ have {\st}
  with respect to a symmetric basis $(e_\gamma)_{\gamma \in
    \Gamma}$. Then $X$ admits an equivalent polyhedral norm.
\end{thm}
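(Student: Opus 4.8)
The plan is to reduce the theorem to a sufficient condition for polyhedrality, and then to manufacture the functionals it requires out of property \st{} together with the rigidity of a symmetric basis. The criterion I intend to use is the following (due in essence to Fonf): if $(f_n)_{n \in \N} \subseteq X^*$ is a bounded, symmetric family (so that $-f_n$ occurs whenever $f_n$ does) for which
\[
\tn{x} \;=\; \sup_n f_n(x)
\]
is an equivalent norm and $\limsup_n f_n(x) < \tn{x}$ for every $x \neq 0$, then $\tndot$ is polyhedral. I would include its short proof: on a finite-dimensional subspace $E$ the function $x \mapsto \limsup_n f_n(x)$ is upper semicontinuous and lies strictly below $\tndot$ on the compact set $S_E$, so a routine compactness argument yields $N$ and $\epsilon > 0$ with $\sup_{n > N} f_n(x) \leqslant (1-\epsilon)\tn{x}$ on $E$; by homogeneity $\tn{x} = \max_{n \leqslant N} f_n(x)$ for all $x \in E$, whence $B_E$ is a polytope.

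The first reduction is to assume the basis is $1$-symmetric and $1$-unconditional; passing to the equivalent symmetric norm only alters $c(x),d(x)$ and rescales the argument of $\omega$, so \st{} survives. Writing $\lambda_n = \n{\sum_{i=1}^n e_i}$ for the nondecreasing fundamental function, I would dispose of the case $\sup_n \lambda_n < \infty$ first: a telescoping estimate then gives $\n{x} \leqslant (\sup_n \lambda_n)\pn{x}{\infty}$, so $X$ is isomorphic to $c_0$ and is polyhedral. Assume therefore $\lambda_n \to \infty$. The structural heart is to read off from \st{} the finiteness of the \emph{large part} of each vector. By Remark~\ref{rem_star} we may work on $S_X$, which (as in the abstract) we split into the countably many pieces $G_j = \set{x \in S_X}{c(x) \leqslant j,\ d(x) \leqslant j}$. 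Applying \st{} on $G_j$ with $A = \set{\gamma}{|e^*_\gamma(x)| \geqslant \delta}$ gives $\n{x} \leqslant \n{P_A x} + j\,\omega(j\delta)$, while $1$-unconditionality and symmetry force $\delta\lambda_{|A|} \leqslant \n{P_A x} \leqslant 1$, hence $|A| \leqslant K_j(\delta) := \max\set{N}{\lambda_N \leqslant 1/\delta} < \infty$. Thus, up to an error $j\omega(j\delta)$ that we can make as small as we please, the norm of every $x \in G_j$ is determined by a bounded number of its largest coordinates.

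With this in hand I would build $(f_n)$ from \emph{localized} norming functionals. For each finite $A \subseteq \Gamma$ fix a finite $\rho_{|A|}$-net $\mathcal{N}_A$ of norming functionals for $\lspan\set{e_\gamma}{\gamma \in A}$, taken by $1$-unconditionality to be supported on $A$ with norm at most $1$; by symmetry $\mathcal{N}_A$ is transported from a single net depending only on $|A|$, so the collection over all finite $A$ is countable. To each $g \in \mathcal{N}_A$ I attach a penalty, setting $f = (1-\beta_{|A|})g$ for a sequence $(\beta_m)$ to be chosen, and close under sign changes. The upper estimate $\tn{x} \leqslant \n{x}$ is immediate from $\n{f} \leqslant 1$, while the lower estimate $\tn{x} \gtrsim \n{x}$ follows from the previous paragraph, the net functional aligned with the finitely many large coordinates of $x$ recovering $\n{x}$ up to the net error, the approximation error $j\omega(j\delta)$, and the penalty.

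It remains to verify $\limsup_n f_n(x) < \tn{x}$, and this is where the main obstacle lies. Qualitatively the picture is favourable: the coordinates of a fixed $x$ tend to $0$, so functionals whose supports escape to infinity contribute nothing in the limit, functionals supported on any fixed finite set form a finite subfamily, and the only candidates approaching $\tn{x}$ are those localized on the finite large-coordinate set of $x$. The genuine difficulty is to reconcile this strict inequality with the equivalence of $\tndot$: because $c(x),d(x)$ are \emph{not} uniform over $S_X$, norming a vector in a high piece $G_j$ may require net functionals of large support, precisely the regime where the penalties $\beta_{|A|}$ threaten either to destroy the lower norm estimate (if they grow too fast) or to permit a whole sequence of functionals to accumulate at the value $\tn{x}$ (if they grow too slowly). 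Calibrating $(\beta_m)$, the net gauges $(\rho_m)$, and the thresholds $\delta = \delta(j)$ against the growth of $(\lambda_n)$ so that both demands hold at once is the crux; once this balance is struck, the criterion of the first paragraph applies and $\tndot$ is the desired polyhedral norm.
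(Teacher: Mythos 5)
There is a genuine gap, and it is fatal to the proposal as written: cardinality. Theorem \ref{thm_symmetric} is stated for a symmetric basis over an \emph{arbitrary} index set $\Gamma$ (the nonseparable case is a main point of this line of work). Your polyhedrality criterion uses a countable sequence $(f_n)_{n\in\N}$ with the condition $\limsup_n f_n(x) < \tn{x}$, but your construction produces one net of norming functionals for \emph{each} finite $A \subseteq \Gamma$. Symmetry makes the nets on equinumerous sets transported copies of a single model net, but they remain distinct functionals with distinct supports, so the family has the cardinality of the collection of finite subsets of $\Gamma$ --- uncountable whenever $\Gamma$ is. The claimed countability is a non sequitur, and it cannot be repaired: a countable family of finitely supported functionals has all supports inside a countable set $\Gamma_0 \subseteq \Gamma$, so its supremum vanishes on vectors supported in $\Gamma\setminus\Gamma_0$ and is never an equivalent norm when $\Gamma$ is uncountable. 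Moreover, the sequential criterion itself is intrinsically separable; for uncountable boundaries the $\limsup$ condition must be replaced by conditions on $w^*$-limit points or by a decomposition device, which is exactly what the paper imports via Proposition \ref{prop_fpst} (Corollary 14 of \cite{fpst:14}).

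Even if you restrict to $\Gamma = \N$, the proposal stops at its own declared crux: you never carry out the calibration of the penalties $(\beta_m)$, the net gauges $(\rho_m)$ and the thresholds $\delta(j)$ that would make the lower norm estimate and the strict inequality $\limsup_n f_n(x) < \tn{x}$ hold simultaneously, and the tension you describe is real --- the non-uniformity of $c(x), d(x)$ over $S_X$, i.e.~the pieces $G_j$, is precisely the difficulty that Corollary 14 of \cite{fpst:14} was designed to absorb, so you are in effect attempting to reprove that result inline and halting at its hardest step. Your opening moves do parallel the paper: the dichotomy (bounded fundamental function gives $X \cong c_0(\Gamma)$, hence polyhedral; otherwise $\lambda_n \to \infty$) and the use of the sets $A = \set{\gamma}{|e^*_\gamma(x)| \geqslant \delta}$ with $\delta\lambda_{|A|} \leqslant \n{P_A x}$ are essentially the paper's Proposition \ref{prop_lim_lambda_infty}, which instead feeds the tail estimate $\pn{R_{A_n(x)}x}{\infty} \leqslant K\n{x}\lambda_n^{-1}$ into {\st} to obtain $\n{x} \leqslant \sup_{|A|\leqslant n}\n{P_A x} + m\,\omega(m\lambda_n^{-1})$, and then delegates all the delicate work to Corollary \ref{cor_inf_lim_inf}, Theorem \ref{thm_lim_inf} and Proposition \ref{prop_M-basis}, ending at Proposition \ref{prop_fpst}; no functional-by-functional calibration is needed on that route. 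One further flaw worth fixing if you salvage the separable case: in your proof of the criterion, $x \mapsto \limsup_n f_n(x)$ need \emph{not} be upper semicontinuous (a pointwise $\limsup$ of continuous functions generally is not); what rescues the compactness argument is that a bounded family is equi-Lipschitz, so each $\sup_{n>N} f_n$ is continuous and the pointwise bound $\sup_{n>N_x} f_n(x) < \tn{x} - 2\epsilon_x$ propagates to a neighbourhood of $x$ in $S_E$.
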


The proof of this theorem follows from the next result.

\begin{thm}\label{thm_lim_inf} Let $X$ be a Banach space having an
  unconditional basis $(e_\gamma)_{\gamma \in \Gamma}$. Let
  $(a_n)_{n=1}^\infty$ be a sequence of positive numbers tending to
  $0$, such that
\begin{equation}\label{eqn_unc_est}
\lim\inf_{n \to \infty} a_n^{-1}\left(\n{x}-\sup_{|A| \leqslant
    n}\n{P_A x}\right) \;<\; \infty \qquad\text{for every }x \in X.
\end{equation}
Then $X$ admits an equivalent polyhedral norm. 

Alternatively, if $X$ admits a Schauder basis $(e_j)_{j=1}^\infty$, we
can reach the same conclusion if we replace condition
(\ref{eqn_unc_est}) by
\begin{equation}\label{eqn_Sch_est}
\lim\inf_{n \to \infty} a_n^{-1}\left(\n{x}-\n{P_n x}\right) \;<\;
\infty \qquad\text{for every }x \in X.
\end{equation}
\end{thm}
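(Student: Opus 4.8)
The plan is to reduce the statement to a clean sufficient condition for polyhedrality and then to construct the requisite functionals from the asymptotic hypothesis. Recall (after Klee and Fonf) that if $M \subseteq X^*$ is a bounded, symmetric set whose only weak$^*$ cluster point is $0$, and if $|x| := \sup_{f \in M} f(x)$ is equivalent to $\n{x}$, then $(X,|\cdot|)$ is polyhedral. Indeed, the map $x \mapsto (f(x))_{f \in M}$ is then an isomorphic embedding of $X$ into $c_0(M)$: for each $x$ and each $\varepsilon>0$ the set $\set{f \in M}{|f(x)| \geqslant \varepsilon}$ can have no weak$^*$ cluster point (at such a point the weak$^*$-continuous evaluation would be $\geqslant \varepsilon$, contradicting weak$^*$-nullity), hence is finite. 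Since finite-dimensional subspaces of any $c_0(M)$ are polytopes---restricted to a finite-dimensional $F$ one has $\n{f|_F} \to 0$, so all but finitely many coordinate slabs are inactive on $B_F$---$X$ is isomorphically polyhedral. By Remark~\ref{rem_star} it suffices to verify the estimates on $S_X$, and I would treat the two cases in parallel, writing $s_n(x)$ for $\sup_{|A|\leqslant n}\n{P_A x}$ under (\ref{eqn_unc_est}) and for $\n{P_n x}$ under (\ref{eqn_Sch_est}).

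The first concrete step is to observe that $s_n$ is computed by \emph{finitely supported} functionals: since any $g \in B_{X^*}$ yields $P_A^*g$ with $\supp(P_A^*g)\subseteq A$ and $\n{P_A^*g}\leqslant K$ (the unconditional constant), while a norm-one $h$ supported on $A$ obeys $h(x)=h(P_A x)\leqslant \n{P_A x}$, one gets $\sup\set{h(x)}{\n{h}\leqslant 1,\ |\supp(h)|\leqslant n} \leqslant s_n(x) \leqslant K\sup\set{h(x)}{\n{h}\leqslant 1,\ |\supp(h)|\leqslant n}$. On each finite-rank piece the dual ball is compact, so it carries, for every $\eta>0$, a finite $\eta$-net of norming functionals. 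The set $M$ will be assembled level by level from such nets, the level-$n$ functionals scaled and penalised by quantities manufactured from $(a_n)$. Unconditionality ensures that a functional supported on high indices acts only through the tail $R_A x$, with $\pn{R_A x}{\infty}\to 0$; this is the mechanism that should drive $M$ towards $0$ weak$^*$ and leave $0$ as its sole cluster point.

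The hard part is to reconcile the two competing demands on $M$: the values must recover a fixed fraction of $\n{x}$ (the lower bound in the norm equivalence), yet no infinite subfamily may approach the supremum (weak$^*$-nullity, equivalently the finite-near-maximum property). This is delicate precisely because $s_n(x)$ \emph{increases} to $\n{x}$: writing $w_n=1-\mu_n$ with $\mu_n\downarrow 0$, a direct computation gives $w_{n+1}s_{n+1}(x)-w_n s_n(x) = (1-\mu_{n+1})(s_{n+1}(x)-s_n(x)) + (\mu_n-\mu_{n+1})s_n(x) \geqslant 0$, so any monotone reweighting of the prefix quantities is again monotone and never isolates a maximiser---the gap cannot come from the numbers $s_n$ alone. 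Consequently $M$ must be indexed by the supports themselves, with each functional penalised according to the \emph{location} of its support, and here the hypothesis enters decisively: the finite $\liminf$ in (\ref{eqn_unc_est}) (respectively (\ref{eqn_Sch_est})) furnishes, for each $x$, a constant $C(x)$ and infinitely many $n$ with $\n{x}-s_n(x)\leqslant C(x)a_n$, so that along this subsequence the penalised values remain comparable to $\n{x}$ and secure the lower estimate, while the decay $a_n\to 0$ forces the near-maximising functionals for a fixed $x$ to have supports confined to its finitely many large coordinates.

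I expect the main obstacle to be exactly this simultaneous bookkeeping in the final step---calibrating the net fineness $\eta_n$ and the support-penalties so as to guarantee, at one stroke, the two-sided norm equivalence and the absence of any nonzero weak$^*$ cluster point of $M$. The role of the $\liminf$ hypothesis is to supply the precise quantitative slack that makes these two requirements compatible; the content of the theorem is that this slack is enough.
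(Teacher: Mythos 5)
Your reduction is stated correctly: a bounded symmetric $M \subseteq X^*$ whose only weak\textsuperscript{*} cluster point is $0$ and for which $\sup_{f \in M} f(x)$ is an equivalent norm does give an isomorphic embedding into $c_0(M)$, hence isomorphic polyhedrality. But the proposal stops exactly where the proof has to happen: the set $M$ is never constructed and neither of its two required properties is verified. What you defer as ``the main obstacle'' --- calibrating net fineness and support penalties --- is not bookkeeping; it is the whole content of the theorem, and the paper carries it with two devices your sketch lacks. First, the $x$-dependence in the hypothesis (the constant $C(x)$ and the $x$-dependent subsequence of good $n$) must be converted into uniform estimates before any functional family can be assembled: this is Proposition \ref{prop_M-basis}, which chooses $(n_k)$ with $ka_{n_k} \to 0$, attaches to each $x \in S_X$ indices $m(x)$ and $\ell(x)$, and decomposes $S_X = \bigcup_k S_k$ so that on $S_k$ the norm is $(1-ka_{n_k})$-approximated by norm-one functionals with $|\supp(f)| \leqslant n_k$. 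Second, passing from such a decomposition to a polyhedral renorming is precisely the cited Proposition \ref{prop_fpst} (\cite[Corollary 14]{fpst:14}), a substantial theorem that is not a finite-net argument: for uncountable $\Gamma$ there are uncountably many supports of size at most $n$, so the level-$n$ stratum of any candidate $M$ is uncountable, and finiteness of $\set{f \in M}{|f(x)| \geqslant \varepsilon}$ cannot be secured by counting net points; no naive penalization scheme is known to close this.

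There is also a quantitative flaw in your first concrete step. Writing $\sigma_n(x)$ for the supremum of $f(x)$ over norm-one $f$ with $|\supp(f)| \leqslant n$, your inequality $\sigma_n(x) \leqslant s_n(x) \leqslant K\sigma_n(x)$ only yields $\n{x}-\sigma_n(x) \leqslant \big(\n{x}-s_n(x)\big) + (1-K^{-1})s_n(x)$, and the second term does not tend to $0$ when $K>1$; hence the additive estimate $\n{x}-s_n(x) \leqslant C(x)a_n$ does \emph{not} transfer to finitely supported functionals, which is what any boundary-type construction needs. The paper avoids this by renorming first: $\tn{x} = \sup\set{\psi(x,\alpha)}{\alpha \in [-1,1]^\Gamma}$ makes the basis unconditionally monotone, so that $\tn{P_A x}$ is \emph{exactly} the supremum over norm-one functionals supported in $A$, and a maximizing $\beta \in [-1,1]^\Gamma$ transfers (\ref{eqn_unc_est}) to $\tndot$ with no loss (in the Schauder case, $\tn{x}=\sup_n \n{P_n x}$, together with the observation that $\tn{x}>\n{x}$ forces $\tn{x}-\tn{P_nx}=0$ for large $n$, plays the same role). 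Finally, note that your target criterion is strictly more demanding than the conclusion in the nonseparable setting: embeddability into $c_0(M)$ is a priori stronger than polyhedral renormability when $\Gamma$ is uncountable, so even if the construction of $M$ were completed, you would be proving more than the theorem asserts, with no evidence that hypotheses (\ref{eqn_unc_est}) or (\ref{eqn_Sch_est}) are strong enough for that.
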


The following remark will be used a few times in proofs throughout the paper.
It also allows us to simplify the expression $\sup_{|A|\leqslant n} \n{P_A x}$
in condition (\ref{eqn_unc_est}), in the event that the basis of $X$ is
$1$-symmetric. 

\begin{remark}\label{rem_A_n(x)}
Given non-zero $x \in X$, where $X$ has an unconditional basis
$(e_\gamma)_{\gamma \in \Gamma}$, we can enumerate $\supp(x)$ as
a (finite or infinite) sequence $(\gamma_k)_{k\geqslant 1}$ of distinct points
in $\Gamma$, in such a way that $|e_{\gamma_1}^*(x)| \geqslant |e_{\gamma_2}^*(x)|
\geqslant |e_{\gamma_3}^*(x)| \dots$. Set $A_n(x)=\{\gamma_1,\dots,
\gamma_n\}$ (or set $A_n(x)=\supp(x)$ if $|\supp(x)| < n$). If the basis
of $X$ is $1$-symmetric then $\sup_{|A|\leqslant n} \n{P_A x}$ in condition
(\ref{eqn_unc_est}) is equal to $\n{P_{A_n(x)}x}$. The choice of the
$\gamma_k$, and thus the sets $A_n(x)$, may not be unique, however, said
choice will not matter whenever we make use of these sets. 
\end{remark}

Section \ref{sect_examples} is devoted to examples. In it, we present
a series of examples of Banach spaces having {\st}, an example that
exposes the difference between conditions (\ref{eqn_unc_est}) and
(\ref{eqn_Sch_est}) in Theorem \ref{thm_lim_inf}, and an example of a
non-symmetric equivalent norm on $c_0$ that does not satisfy condition
(\ref{eqn_unc_est}) with respect to the usual basis. In Section
\ref{sect_M-basis}, we consider a version of Theorem
\ref{thm_lim_inf}, namely Proposition \ref{prop_M-basis}, in the more
general context of Markushevich bases, and present the proofs.

We finish this section by making some observations about condition
(\ref{eqn_Sch_est}) above. Let us recall that $B \subseteq S_{X^*}$ is
called a \emph{boundary} of $X$ (with respect to the norm $\ndot$) if,
given $x \in X$, there exists $f \in B$ such that $f(x)=\n{x}$. In
\cite{f:89} and \cite{h:95}, it was proved that every Banach space
that has a $\sigma$-compact boundary (with respect to the norm
topology) admits an equivalent polyhedral norm. We show that, in this
case, condition (\ref{eqn_Sch_est}) is necessary, provided that
$(e_j)_{j=1}^\infty$ is shrinking.

\begin{prop}\label{prop_Sch_est_nec} Assume that $X$ has a shrinking
  Schauder basis and a $\sigma$-compact boundary. Then there exists a
  sequence $(a_n)_{n=1}^\infty$ of positive numbers tending to $0$,
  such that (\ref{eqn_Sch_est}) holds.
\end{prop}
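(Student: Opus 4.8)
The plan is to use the shrinking hypothesis to make the tail functionals small, and the $\sigma$-compactness of the boundary to make this smallness \emph{uniform} over countably many compact pieces; the sequence $(a_n)$ is then produced by a diagonal argument. Write $P_n^*$ and $R_n^*=I-P_n^*$ for the adjoints of $P_n$ and $R_n$, so that $(R_n^*f)(x)=f(R_nx)$ for all $x\in X$ and $f\in X^*$. The starting observation is an elementary inequality: given $x\in X$, the boundary provides $f\in B$ with $f(x)=\n{x}$, and since $\n{f}=1$ we have $\n{P_nx}\geqslant f(P_nx)$, whence
\[
\n{x}-\n{P_nx}\;\leqslant\; f(x)-f(P_nx)\;=\;f(R_nx)\;=\;(R_n^*f)(x)\;\leqslant\;\n{R_n^*f}\,\n{x}.
\]
Because the basis is shrinking, $(e_j^*)$ is a basis of $X^*$, which is exactly the statement that $\n{R_n^*f}=\n{f-P_n^*f}\to 0$ for every $f\in X^*$. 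Thus the right-hand side tends to $0$; the difficulty --- and the crux of the proof --- is that this rate a priori depends on $f$, hence on $x$, whereas we must exhibit a single null sequence $(a_n)$ that serves all $x$ at once.

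To remove this dependence I would use $\sigma$-compactness. Write $B=\bigcup_{k=1}^\infty K_k$ with each $K_k$ norm-compact and $K_1\subseteq K_2\subseteq\cdots$, and set $\delta_n^{(k)}=\sup_{f\in K_k}\n{R_n^*f}$. The key claim is that $\delta_n^{(k)}\to 0$ as $n\to\infty$, for each fixed $k$. This follows from a standard net argument: given $\varepsilon>0$, cover $K_k$ by finitely many balls $B(f_i,\varepsilon)$ with $f_i\in K_k$; choose $N$ with $\n{R_n^*f_i}<\varepsilon$ for all $i$ and all $n\geqslant N$; and use $\n{R_n^*f}\leqslant\n{R_n^*}\n{f-f_i}+\n{R_n^*f_i}\leqslant (1+C)\varepsilon$, where $C=\sup_n\n{P_n}<\infty$ is the basis constant, so that $\sup_n\n{R_n^*}=\sup_n\n{R_n}\leqslant 1+C$. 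Replacing $\delta_n^{(k)}$ by $\sup_{m\geqslant n}\delta_m^{(k)}$, I may further assume that $\delta_n^{(k)}$ is non-increasing in $n$; it remains non-decreasing in $k$ since the $K_k$ increase, and still tends to $0$ in $n$ for each $k$.

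It then remains to diagonalise. Choose a strictly increasing sequence $(M_k)$ with $\delta_n^{(k)}\leqslant 1/k$ for all $n\geqslant M_k$, put $k(n)=\max\set{k}{M_k\leqslant n}$ (so that $k(n)\to\infty$ monotonically), and define $a_n=\delta_n^{(k(n))}+2^{-n}$. Then $a_n>0$ and $a_n\to 0$, since $\delta_n^{(k(n))}\leqslant 1/k(n)\to 0$. For any fixed $k$, once $n$ is large enough that $k(n)\geqslant k$, monotonicity in the superscript gives $\delta_n^{(k)}\leqslant\delta_n^{(k(n))}\leqslant a_n$. Finally, given $x\neq 0$, the norming functional $f$ lies in some $K_{k_0}$, so combining the first displayed inequality with the last estimate yields, for all large $n$,
\[
\n{x}-\n{P_nx}\;\leqslant\;\n{R_n^*f}\,\n{x}\;\leqslant\;\delta_n^{(k_0)}\,\n{x}\;\leqslant\;a_n\,\n{x},
\]
hence $a_n^{-1}(\n{x}-\n{P_nx})\leqslant\n{x}$ eventually, so $\liminf_{n}a_n^{-1}(\n{x}-\n{P_nx})\leqslant\n{x}<\infty$ and (\ref{eqn_Sch_est}) holds; the case $x=0$ is trivial. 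The main obstacle is the passage from pointwise to uniform decay of $\n{R_n^*f}$ on each compact piece, together with the bookkeeping in the diagonal choice of $(a_n)$; once these are in hand the conclusion is immediate.
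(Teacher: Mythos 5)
Your proof is correct and follows essentially the same route as the paper: use the shrinking basis to get $\n{R_n^*f}\to 0$, use norm compactness of each piece $K_k$ to make this uniform via $\delta_n^{(k)}=\sup_{f\in K_k}\n{R_n^*f}$, and then merge the countably many null sequences into a single $(a_n)$. The only difference is the merging device --- the paper uses the weighted series of Fact \ref{fact_sequences}, $a_n=\sum_m 2^{-m}a_{m,n}/(1+a_{m,n})$, which gives a bound valid for \emph{all} $n$, whereas your monotonization-plus-diagonalization via $M_k$ and $k(n)$ gives it only eventually, which is still ample for the $\liminf$ condition (\ref{eqn_Sch_est}); aside from the trivially fixable point that $k(n)$ is undefined for $n<M_1$, everything checks out.
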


We have need of the following fact, which will be used also in
Corollary \ref{cor_inf_lim_inf}.

\begin{fact}\label{fact_sequences} For every $m \in \N$, let
  $(a_{m,n})_{n=1}^\infty$ be a sequence of positive numbers such that
  $\lim_{n\to\infty} a_{m,n}=0$. Then the sequence
  \begin{equation}\label{eqn_sequence}
    a_n \;:=\; \sum_{m=1}^\infty 2^{-m}\frac{a_{m,n}}{1+a_{m,n}},
    \qquad n \in \N,
  \end{equation}
  tends to $0$, and 
\[
a_{m,n} \;\leqslant\; 2^m a_n\max_{k \in \N} (a_{m,k}+1),
\]
for all $m,n \in \N$.
\end{fact}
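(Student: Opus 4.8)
The plan is to treat the two assertions separately, both by elementary estimates. First I would record that the series defining $a_n$ converges and is uniformly bounded in $n$: since each $a_{m,n} > 0$ we have $\frac{a_{m,n}}{1+a_{m,n}} \leqslant 1$, so the general term is dominated by $2^{-m}$, whence $0 \leqslant a_n \leqslant \sum_{m=1}^\infty 2^{-m} = 1$ for every $n$. This both makes $a_n$ well-defined and sets up a dominated-convergence-type argument for the limit.

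To prove $a_n \to 0$, I would argue directly with an $\varepsilon$-splitting of the series into a tail and a head. Given $\varepsilon > 0$, choose $M$ so large that $\sum_{m > M} 2^{-m} < \varepsilon/2$; because each summand is at most $2^{-m}$, the tail $\sum_{m>M} 2^{-m}\frac{a_{m,n}}{1+a_{m,n}}$ stays below $\varepsilon/2$ uniformly in $n$. For the finitely many remaining indices $m \leqslant M$, the hypothesis $a_{m,n} \to 0$ (hence $\frac{a_{m,n}}{1+a_{m,n}} \to 0$) lets me choose $N$ so that each of these terms is small once $n \geqslant N$; summing the head contribution against $\sum_{m\leqslant M}2^{-m} < 1$ keeps it under $\varepsilon/2$. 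Adding the two parts gives $a_n < \varepsilon$ for $n \geqslant N$.

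For the inequality, the key observation is that $a_n$ is a sum of non-negative terms and so dominates any single one: $a_n \geqslant 2^{-m}\frac{a_{m,n}}{1+a_{m,n}}$, i.e.\ $\frac{a_{m,n}}{1+a_{m,n}} \leqslant 2^m a_n$. Clearing the denominator yields $a_{m,n} \leqslant 2^m a_n(1+a_{m,n})$. Finally, since $(a_{m,k})_{k\in\N}$ converges to $0$ it is bounded and its supremum is attained, so $1+a_{m,n} \leqslant \max_{k \in \N}(a_{m,k}+1)$; substituting this bound and using $2^m a_n \geqslant 0$ gives the claimed estimate.

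I do not expect a genuine obstacle here: everything reduces to the summability of $(2^{-m})_{m\in\N}$ and to the fact that the truncation $t \mapsto \frac{t}{1+t}$ keeps the terms bounded by $1$ while preserving convergence to $0$. The one point deserving a word of care is that $\max_{k \in \N}(a_{m,k}+1)$ is a genuine maximum rather than merely a supremum; this holds because a sequence of positive numbers converging to $0$ can exceed any fixed positive threshold only finitely often, so the supremum is realised at one of finitely many indices.
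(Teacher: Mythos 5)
Your proof is correct. The paper states this Fact without any proof, treating it as routine, and your argument is precisely the standard one the authors evidently had in mind: the uniform bound $2^{-m}\frac{a_{m,n}}{1+a_{m,n}}\leqslant 2^{-m}$ gives well-definedness and the tail/head splitting for $a_n\to 0$, while dropping all but the $m$th term gives $\frac{a_{m,n}}{1+a_{m,n}}\leqslant 2^m a_n$, after which clearing the denominator and bounding $1+a_{m,n}$ by $\max_{k\in\N}(a_{m,k}+1)$ (a genuine maximum, as you correctly justify, since a null sequence of positive numbers exceeds the positive value $a_{m,1}$ at only finitely many indices) yields the stated inequality.
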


\begin{proof}[Proof of Proposition \ref{prop_Sch_est_nec}] Let
  $(K_m)_{m=1}^\infty$ be a sequence of norm compact subsets of
  $S_{X^*}$, such that $B:=\bigcup_{m=1}^\infty K_m$ is a
  boundary. Since $(e_j)_{j=1}^\infty$ is shrinking, it is well known
  that $\lim_{n \to \infty} \n{R_n^*f}=0$ for all $f \in X^*$
  \cite[Proposition 1.b.1]{lt:77}. Using the norm compactness of the
  $K_m$, $m \in \N$, we see that
\[
a_{m,n} \;:=\; \sup_{f \in K_m} \n{R^*_n f},
\]
tends to $0$ as $n\to\infty$. Let $x \in X$. As $B$ is a boundary, there exists $m\in\N$
such that $f(x)=\n{x}$ for some $f \in K_m$. Given $n \in \N$, we have
\[
\n{x} \;=\; f(x) \;=\; f(P_n x) + R^*_nf(x) \;\leqslant\; \n{P_n x} +
\n{R^*_n f}\,\n{x} \;\leqslant\; \n{P_n x} + a_{m,n}\n{x},
\]
hence
\[
\frac{\n{x}-\n{P_n x}}{a_{m,n}} \;\leqslant\; \n{x},
\]
for all $n \in \N$. Defining $a_n$ as in (\ref{eqn_sequence}) yields
\[
\frac{\n{x}-\n{P_n x}}{a_n} \;\leqslant\; 2^m \max_{k\in\N}(a_{m,k}+1) \n{x},
\]
for all $n \in \N$.
\end{proof}

The requirement that the basis in Proposition \ref{prop_Sch_est_nec}
be shrinking is necessary for the conclusion to hold.

\begin{example}\label{ex_shrinking_basis} The space $c_0$ with its
  natural norm has a countable boundary, but with respect to the
  summing basis of $c_0$, there is no sequence $(a_n)_{n=1}^\infty$
  tending to $0$, such that (\ref{eqn_Sch_est}) holds.
\end{example}

\begin{proof} Let $(e_j)_{j=1}^\infty$ and $(e_j^*)_{j=1}^\infty$ be
  the standard bases of $c_0$ and $\ell_1$, respectively. The set
  $\set{\pm e_j^*}{j \in \N}$ is a countable boundary of $c_0$ with
  respect to its natural norm. If $x_j :=\sum_{i=1}^j e_i$ denotes the
  $j$th element of the summing basis of $c_0$, then $x^*_j =
  e_j^*-e_{j+1}^*$, and with respect to this basis we see that
\[
P_n x \;=\; \sum_{j=1}^n (x(j)-x(j+1))\left(\sum_{i=1}^j e_i\right)
\;=\;\sum_{i=1}^n \big(x(i)-x(n+1)\big)e_i.
\]
Suppose that $x(1)=\pn{x}{\infty}\geqslant |x(n)|+1$ whenever $n
\geqslant 2$. Then, whenever $|x(n+1)|\leqslant \frac{1}{2}$, we have
$\pn{P_n x}{\infty} = x(1)-x(n+1) = \pn{x}{\infty}-x(n+1)$. Given a
sequence $(a_j)_{j=1}^\infty$ of positive numbers tending to $0$,
define $x \in c_0$ by $x(1)=\max_{j\geqslant 1} a_j^{\frac{1}{2}}+1$
and $x(j)=a_{j-1}^\frac{1}{2}$ for $n\geqslant 2$. Fix $m \in \N$ such
that $a_n \leqslant \frac{1}{4}$ whenever $n\geqslant m$. Then
$|x(n+1)| \leqslant \frac{1}{2}$ for such $n$ and
\[
a_n^{-1}(\pn{x}{\infty}-\pn{P_n x}{\infty}) \;=\; a_n^{-1}x(n+1) \;=\;
a_n^{-\frac{1}{2}} \;\to\; \infty. \qedhere
\]
\end{proof}

\section{Examples}\label{sect_examples}

In our first example, we present two wide classes of Banach spaces
that are quite different in character, yet share the property of
having {\st}.

\begin{example}\label{ex_two_classes}$\;$
\begin{enumerate}
\item Let $X$ have a normalized unconditional basis
  $(e_\gamma)_{\gamma \in \Gamma}$ and suppose that the set of all
  \emph{summable elements} of the unit sphere
\[
\set{f \in S_{X^*}}{\sum_{\gamma \in \Gamma} |f(e_\gamma)| < \infty},
\]
with respect to the basis, is a boundary. Then $X$ has {\st}.
\item\label{ex_Orlicz} Let $M$ be a non-degenerate normalized Orlicz
  function, i.e.~$M(t)>0$ for all $t>0$ and $M(1)=1$. Let $\Gamma$ be
  a set and let $h_M(\Gamma)$ be the space of all real functions $x$
  defined on $\Gamma$, such that
\[
\sum_{\gamma \in \Gamma} M\left(\frac{|x(\gamma)|}{\rho}\right) \;<\; \infty,
\]
for all $\rho > 0$. We equip $h_M(\Gamma)$ with the Luxemburg norm
\[
\n{x} \;:=\; \inf\set{\rho>0}{ M\left(\frac{|x(\gamma)|}{\rho}\right) \leqslant 1}.
\]
The space $h_M(\Gamma)$ has {\st} with respect to the unit vector
basis $(e_\gamma)_{\gamma \in \Gamma}$, provided
\begin{equation}\label{eqn_Leung}
\lim_{t\to 0} \frac{M(Kt)}{M(t)} \;=\; \infty,
\end{equation}
for some constant $K>1$.
\end{enumerate}
\end{example}

\begin{proof}$\;$
\begin{enumerate}
\item Set $\omega(t)=t$. Given $x \in X$, take $f \in B$ such that
  $f(x)=\n{x}$. Set $c(x) = \sum_{\gamma \in \Gamma}|f(e_\gamma)|$ and
  $d(x)=1$. Given $A \subseteq \Gamma$,
\begin{align*}
\n{x} \;=\; f(x) &\;=\; f(P_A x) + f(R_A x)\\
&\;=\; f(P_Ax) + \sum_{\gamma \in \Gamma\setminus A} f(e_\gamma)e^*_\gamma(x)\\
&\;\leqslant\; \n{P_A x} + \bigg(\sum_{\gamma \in \Gamma\setminus A}
  |f(e_\gamma)| \bigg) \pn{R_A x}{\infty}\\
&\;\leqslant\; \n{P_A x} + c(x) \pn{R_A x}{\infty}.
\end{align*}
\item Given $t>0$, set
\[
\omega(t) \;=\; \sup\set{\frac{M(\tau)}{M(K\tau)}}{0 < \tau \leqslant t}. 
\]
Evidently, $\omega$ is a continuous non-decreasing function and
$\lim_{t\to 0} \omega(t)=0$. Given $x = \sum_{\gamma \in \Gamma}
x(\gamma)e_\gamma \in h_M(\Gamma)$, $\n{x}=1$, we let
\[
c(x) \;=\; \sum_{\gamma \in \Gamma} M(K|x(\gamma)|),
\]
and $d(x)=1$. From the definition of $h_M(\Gamma)$, we see that $c(x)$
is finite. Let $A \subseteq \Gamma$. Since $M$ is a convex function
satisfying $M(0)=0$, we have
\[
\sum_{\gamma \in \Gamma} M(\lambda |x(\gamma)|) \;\leqslant\;
\lambda\sum_{\gamma \in \Gamma} M(|x(\gamma)|),
\]
whenever $0 \leqslant \lambda \leqslant 1$. In particular, as $\n{P_A x} \leqslant 1$,
\[
\sum_{\gamma \in A} M(|x(\gamma)|) \;\leqslant\; \n{P_A x}\sum_{\gamma
  \in A} M\bigg(\frac{|x(\gamma)|}{\n{P_A x}} \bigg) \;=\; \n{P_A x}.
\]
Therefore,
\begin{align*}
\n{x} \;=\; 1 &\;=\; \sum_{\gamma \in \Gamma} M(|x(\gamma)|)\\
&\;=\; \sum_{\gamma \in A} M(|x(\gamma)|) + \sum_{\gamma \in
  \Gamma\setminus A} M(|x(\gamma)|)\\
&\;\leqslant\; \n{P_A x} + \bigg( \sup_{\gamma \in \Gamma\setminus A}
  \frac{M(|x(\gamma)|)}{M(K|x(\gamma)|)}\bigg)\sum_{\gamma \in
  \Gamma\setminus A} M(K|x(\gamma)|)\\
&\;\leqslant\; \n{P_A x} + \omega(\pn{R_A x}{\infty})\sum_{\gamma \in
  \Gamma} M(K|x(\gamma)|)\\
&\;=\; \n{P_A x} + c(x)\omega(\pn{R_A x}{\infty}) \tag*{\qedhere}
\end{align*}
\end{enumerate}
\end{proof}

\begin{remark}$\;$
\begin{enumerate}
\item For the use of summable boundaries in polyhedral renorming, see
  \cite{bs:16,fst:14}.
\item D.~Leung proved that $h_M(\N)$ admits an equivalent polyhedral
  norm provided $M$ satisfies (\ref{eqn_Leung}) \cite{l:94}. For the
  case when $\Gamma$ is an arbitrary set, see \cite{fpst:08,fpst:14}.
\end{enumerate}
\end{remark}

\begin{example}\label{ex_Nakano} We consider a symmetric version of
  the Nakano space. Let $\Gamma$ be a set and let $(p_n)_{n=1}^\infty$ be
  a non-decreasing sequence, with $p_1 \geqslant 1$. By
  $h^S_{(p_n)}(\Gamma)$ we denote the space of all real functions $x$
  defined on $\Gamma$, such that
\[
\phi\bigg(\frac{x}{\rho}\bigg) \;<\; \infty,
\]
for all $\rho > 0$, where
\[
\phi(x) \;:=\; \sup\set{\sum_{k=1}^\infty
  |x(\gamma_k)|^{p_k}}{(\gamma_k)_{k=1}^\infty \text{ is a sequence of
    distinct points in }\Gamma}.
\]
Given $x \in h^S_{(p_n)}(\Gamma)$, we set
\[
\n{x} \;=\; \inf\set{\rho>0}{\phi\bigg(\frac{x}{\rho}\bigg) \leqslant 1}.
\]
It is easy to see that the standard unit vectors $(e_\gamma)_{\gamma
  \in \Gamma}$ form an unconditional symmetric basis in
$h^S_{(p_n)}(\Gamma)$. We show that $h^S_{(p_n)}(\Gamma)$ satisfies
equation (\ref{eqn_unc_est}) from Theorem \ref{thm_lim_inf}, provided
$p_n \to \infty$.
\end{example}

\begin{proof}
Pick $\theta \in (0,1)$. We show that for every $x \in
h^S_{(p_n)}(\Gamma)$ satisfying $\n{x}=1$, there exists $m(x) \in \N$
such that
\begin{equation}\label{eqn_Nakano}
1-\n{P_{A_n(x)} x} \;\leqslant\; \theta^{p_n},
\end{equation}
whenever $n \geqslant m(x)$, where $A_n(x)$ is any set
$\{\gamma_1,\dots,\gamma_n\}$ of the form described in Remark
\ref{rem_A_n(x)}. Setting $a_n=\theta^{p_n}$ in (\ref{eqn_Nakano})
yields (\ref{eqn_unc_est}).

As in the proof of Example \ref{ex_two_classes} (\ref{ex_Orlicz}), as
$\phi$ is a convex function and $\phi(0)=0$, and $\n{P_{A_n(x)}x}
\leqslant \n{x} = 1$, we have
\begin{equation}\label{eqn_Nakano_1}
\phi(P_{A_n(x)}x) \;\leqslant\;
\n{P_{A_n(x)}x}\phi\left(\frac{P_{A_n(x)}x}{\n{P_{A_n(x)}}}\right)
\;=\; \n{P_{A_n(x)}x}.
\end{equation}
Given $\gamma \in \Gamma\setminus A_n(x)$, and bearing in mind that
$\ndot$ is a lattice norm, we have
\[
|x(\gamma)| \;\leqslant\; \n{R_{A_n(x)} x} \;\leqslant\; \n{x} \;=\; 1,
\]
and therefore
\begin{align*}
  1 \;=\; \phi\left(\frac{R_{A_n(x)}x}{\|R_{A_n(x)}x\|}\right) \;
&=\; \sum_{k=1}^\infty\left(\frac{|x(\gamma_{n+k})|}{\|R_{A_n(x)}x\|}\right)^{p_k}\\
&\geqslant\;
  \sum_{k=1}^\infty\left(\frac{|x(\gamma_{n+k})|}{\|R_{A_n(x)}x\|}\right)^{p_{k+n-1}}\\
&=\;
  \sum_{j=n+1}^\infty\left(\frac{|x(\gamma_j)|}{\|R_{A_n(x)}x\|}\right)^{p_{j-1}}
  \;\geqslant\;
  \sum_{j=n+1}^\infty\frac{|x(\gamma_j)|^{p_j}}{\n{R_{A_n(x)}x}^{p_n}},
\end{align*}
which implies
\begin{align}\label{eqn_Nakano_2}
\sum_{j=n+1}^\infty |x(\gamma_j)|^{p_j} \;\leqslant\; \n{R_{A_n(x)}x}^{p_n}.
\end{align}
There exists $m(x) \in \N$ such that $\n{R_{A_n(x)}x} \leqslant
\theta$ whenever $n \geqslant m(x)$. Together with
(\ref{eqn_Nakano_1}) and (\ref{eqn_Nakano_2}), this implies
\begin{align*}
1 \;=\; \phi(x) \;&=\; \phi(P_{A_n(x)}x) + \sum_{j=n+1}^\infty |x(\gamma_j)|^{p_j}\\
&\leqslant\; \n{P_{A_n(x)}x} + \n{R_{A_n(x)}x}^{p_n} \;\leqslant\;
  \n{P_{A_n(x)}x} + \theta^{p_n},
\end{align*}
whenever $n \geqslant m(x)$.
\end{proof}

The following examples are based on the next simple and well known fact.

\begin{fact} Let $(c_k)_{k=1}^n$ and $(d_k)_{k=1}^n$ be non-increasing
  sequences of non-negative numbers. Then
\begin{equation}\label{eqn_perm}
\sum_{k=1}^n c_k d_{\pi(k)} \;\leqslant\; \sum_{k=1}^n c_k d_k,
\end{equation}
whenever $\pi$ is a permutation of $\{1,\dots,n\}$.
\end{fact}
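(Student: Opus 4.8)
The plan is to establish the rearrangement inequality (\ref{eqn_perm}) by a standard exchange argument, showing that the identity permutation maximises the sum $\sum_{k=1}^n c_k d_{\pi(k)}$ over all permutations $\pi$ of $\{1,\dots,n\}$. Since there are only finitely many such permutations, a maximiser exists; I would fix one, call it $\pi$, and argue that it may be replaced by a permutation $\sigma$ for which $(d_{\sigma(k)})_{k=1}^n$ is non-increasing, at which point the sum coincides with $\sum_{k=1}^n c_k d_k$.

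The heart of the matter is the exchange step. Suppose there exist indices $i < j$ with $d_{\pi(i)} < d_{\pi(j)}$ (an inversion). Let $\pi'$ be obtained from $\pi$ by swapping the values at positions $i$ and $j$, so that $\pi'(i) = \pi(j)$, $\pi'(j) = \pi(i)$, and $\pi'(k) = \pi(k)$ otherwise. Only the terms indexed by $i$ and $j$ change, and a direct computation gives
\[
\sum_{k=1}^n c_k d_{\pi'(k)} - \sum_{k=1}^n c_k d_{\pi(k)} \;=\; (c_i - c_j)\big(d_{\pi(j)} - d_{\pi(i)}\big) \;\geqslant\; 0,
\]
because $c_i \geqslant c_j$ (as $i<j$ and $(c_k)$ is non-increasing) and $d_{\pi(j)} > d_{\pi(i)}$ by the choice of inversion. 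Thus a swap never decreases the sum. Iterating this step (eliminating inversions, as in a sorting procedure) produces after finitely many swaps a permutation $\sigma$ for which $(d_{\sigma(k)})_{k=1}^n$ is non-increasing, with $\sum_{k=1}^n c_k d_{\sigma(k)} \geqslant \sum_{k=1}^n c_k d_{\pi(k)}$.

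To finish, note that $(d_{\sigma(k)})_{k=1}^n$ is a non-increasing rearrangement of the values $d_1,\dots,d_n$; since $(d_k)_{k=1}^n$ is itself non-increasing, these two sequences agree term by term, whence $\sum_{k=1}^n c_k d_{\sigma(k)} = \sum_{k=1}^n c_k d_k$. Combining the two inequalities yields (\ref{eqn_perm}) for the arbitrary starting permutation $\pi$. Since each step is an elementary algebraic manipulation, there is no genuine obstacle here; the only point requiring a little care is checking that the inversion-elimination terminates and that ties among the $d_k$ cause no trouble (they do not, as equal values may be reordered without affecting the sum).

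A shorter alternative would be summation by parts. Writing $D_m = \sum_{k=1}^m d_{\pi(k)}$ and $D_m^0 = \sum_{k=1}^m d_k$, one has $D_m^0 \geqslant D_m$ for every $m$ (the left side is the sum of the $m$ largest values) and $D_n^0 = D_n$. Abel's formula then reduces the difference of the two sides of (\ref{eqn_perm}) to $\sum_{k=1}^{n-1}(c_k - c_{k+1})(D_k^0 - D_k) \geqslant 0$, since each factor $c_k - c_{k+1}$ and $D_k^0 - D_k$ is non-negative.
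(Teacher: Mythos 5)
Your proof is correct, but there is nothing in the paper to compare it against: the authors state this Fact without proof, explicitly calling it ``simple and well known''. Both of your routes are standard and complete. The exchange argument is fine as written --- the identity $\sum_k c_k d_{\pi'(k)} - \sum_k c_k d_{\pi(k)} = (c_i - c_j)(d_{\pi(j)} - d_{\pi(i)})$ is exactly right, and termination of the inversion-elimination is guaranteed by the classical observation that transposing an inversion pair strictly decreases the total inversion count (defining inversions via strict inequality also disposes of ties, as you note). The summation-by-parts alternative is the slicker of the two: since $D_m^0 - D_m \geqslant 0$ for all $m$ and $D_n^0 = D_n$, Abel's formula gives the difference of the two sides as $\sum_{k=1}^{n-1}(c_k - c_{k+1})(D_k^0 - D_k) \geqslant 0$ in one line, and it incidentally reveals that the non-negativity hypothesis on the $d_k$ (and on the $c_k$, since the $c_n$-term vanishes) is not actually needed --- monotonicity of both sequences suffices.
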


In the next example, we expose the difference between conditions
(\ref{eqn_unc_est}) and (\ref{eqn_Sch_est}) of Theorem
\ref{thm_lim_inf}.

\begin{example} There exists an equivalent norm $\ndot$ on $c_0$ that
  is symmetric with respect to the usual basis, such that
\begin{enumerate}
\item given $x \in c_0$,
\begin{equation}\label{eqn_Day_1}
2^n\bigg(\n{x}-\sup_{|A|\leqslant n} \n{P_A x} \bigg) \;\leqslant\; 4\n{x}, 
\end{equation}
\item but given a sequence $(a_n)_{n=1}^\infty$ of positive numbers
  tending to $0$, there exists $y \in c_0$ such that
\begin{equation}\label{eqn_Day_2}
\lim_{n\to\infty} a_n^{-1}\big(\n{y}-\n{P_n y} \big) \;=\; \infty.
\end{equation}
\end{enumerate} 
\end{example}

\begin{proof}
Consider Day's norm, defined on $c_0$ by
\begin{equation}\label{eqn_Day_norm}
\n{x} \;=\; \sup \set{\bigg(\sum_{k=1}^\infty
  2^{-k}x(j_k)^2\bigg)^{\frac{1}{2}}}{(j_k)_{k=1}^\infty \text{ is a
    sequence of distinct points in }\N}.
\end{equation}
\begin{enumerate}
\item Pick $x \in c_0$ such that $\n{x}=1$. We define $A_n(x)$ as
in Remark \ref{rem_A_n(x)}. From
  (\ref{eqn_perm}), it follows that
\begin{equation}\label{eqn_Day_3}
\sup_{|A|\leqslant n} \n{P_A x} \;=\; \n{P_{A_n(x)}x} \;=\;
\bigg(\sum_{k=1}^n 2^{-k}x(j_k)^2 \bigg)^{\frac{1}{2}}.
\end{equation}
Since $|x(\gamma)| \leqslant 2\n{x}=2$, we have
\[
1-\n{P_{A_n(x)}x} \;\leqslant\; 1-\n{P_{A_n(x)}}^2 \;=\;
\sum_{k=n+1}^\infty 2^{-k}x(j_k)^2 \;\leqslant\; 4\sum_{k=n+1}^\infty
2^{-k} \;=\; 2^{2-n}.
\]
Together with (\ref{eqn_Day_3}), this implies (\ref{eqn_Day_1}).
\item Let $(a_n)_{n=1}^\infty$ be a sequence of positive numbers
  tending to $0$. Let $(n_k)_{k=1}^\infty$ be a strictly increasing
  sequence of positive integers such that
\begin{equation}\label{eqn_Day_4}
a_n \;\leqslant\; 8^{-k},
\end{equation}
for all $n \geqslant n_k$. Define $x \in c_0$ by
\[
x(n) \;=\; \begin{cases} 3^{\frac{1}{2}}\cdot 2^{-\frac{k}{2}} &
  \text{if }n=n_k, \\ 0 & \text{otherwise.} \end{cases}
\]
From (\ref{eqn_Day_norm}) we get $\n{x}=1$ and 
\[
1 - \n{P_n x}^2 \;=\; 3\sum_{i=k+1}^\infty 4^{-i} \;=\; 4^{-k},
\]
whenever $n_k \leqslant n < n_{k+1}$. Hence,
\[
1-\n{P_n x} \;>\; {\ts\frac{1}{2}}(1-\n{P_n x}^2) \;=\; {\ts\frac{1}{2}}4^{-k}.
\]
Using (\ref{eqn_Day_4}), we obtain $a_n^{-1}(1-\n{P_nx}) \geqslant
2^{k-1}$ whenever $n \geqslant n_k$, which yields
(\ref{eqn_Day_2}). \qedhere
\end{enumerate}
\end{proof}

The next example shows that condition (\ref{eqn_Sch_est}) of Theorem
\ref{thm_lim_inf} can fail even on $c_0$, if the norm fails to be
symmetric.

\begin{example}\label{ex_non-sym} There exists on $c_0$ an equivalent
  (non-symmetric) norm $\ndot$, with respect to which the standard
  basis is normalized and $1$-unconditional, and having the property
  that given a sequence $(a_n)_{n=1}^\infty$ of positive numbers
  tending to $0$, there exists $x \in c_0$ such that
\begin{equation}\label{eqn_non-sym}
\lim_{n\to\infty} a_n^{-1}\bigg(\n{x}-\sup_{|A|\leqslant n} \n{P_A x} \bigg) \;=\; \infty.
\end{equation}
\end{example}

\begin{proof} Let 
\[
D\;=\; \set{m^{-1}2^{-k}}{m,k \in \N},
\]
and let $\map{q}{\N}{D}$ have the property that $q^{-1}(d)$ is
infinite for all $d \in D$. Write $q_n=q(n)$, $n \in \N$. Let $S$ be
the set of all infinite subsets $L \subseteq \N$, such that $q_j
\geqslant q_n$ whenever $j,n \in L$, $j \leqslant n$, and $\sum_{n \in
  L} q_n=1$. Set
\[
E \;=\; \set{\pm e_n^*}{n \in \N} \cup \set{2\sum_{n \in L}s_n q_n
  e_n^*}{L \in S\text{ and } s_n \in \{-1,1\}\text{ for all }n\in\N},
\]
and define the norm
\[
\n{x} \;=\;\set{f(x)}{f \in E}.
\]
Then $\pn{x}{\infty} \leqslant \n{x} \leqslant 2\pn{x}{\infty}$ and
$\n{e_n}=1$, as $q_n \leqslant \frac{1}{2}$ for all $n$, and the signs
$s_n$ in the definition of $E$ ensure that the standard basis is
$1$-unconditional with respect to $\ndot$.

Given $x \in c_0$, we shall say that $|x|$ is non-increasing on its
support if $|x(j)| \geqslant |x(n)|$ whenever $j,n \in \supp(x)$ and
$j \leqslant n$. Next, we prove the following fact. Let $x \in c_0$
such that $|x|$ is non-increasing on its support, and suppose that
there exists $L \in S$ such that $\supp(x)\subseteq L$ and
\[
\pn{x}{\infty} \;<\; 2\sum_{j \in L} q_j|x(j)|.
\]
Furthermore, let $L_n$ be the set of the first $n$ elements of $L$,
and let $n_0$ be large enough so that 
\[
\pn{x}{\infty} \;<\; 2\sum_{j \in L_{n_0}} q_j|x(j)|.
\]
Then the conclusion is that
\begin{equation}\label{eqn_non-sym_1}
\n{x}-\sup_{|A| \leqslant n} \n{P_A x} \;=\; 2\sum_{j \in L\setminus L_n} q_j|x(j)|,
\end{equation}
whenever $n \geqslant n_0$.

To prove this fact, first we show that
\begin{equation}\label{eqn_norm_attainment}
2\sum_{j \in L} q_j|x(j)| \;=\; \n{x}.
\end{equation}
One inequality is obvious. To see the other, since $\pn{x}{\infty} <
2\sum_{j \in L} q_j|x(j)|$, all we need to do is check that
\[
\sum_{j \in M} q_j|x(j)| \;\leqslant\; \sum_{j \in L} q_j|x(j)|,
\]
whenever $M \in S$, and indeed this holds, because $\supp(x)\subseteq
L$. Next, since $|x|$ is non-increasing on its support, as is
$(q_j)_{j \in L}$, given $n \geqslant n_0$ and $A \subseteq \N$,
$|A|\leqslant n$, we have
\[
\n{P_A x} \;\leqslant\; 2\sum_{j \in L_n} q_j|x(j)| \;=\; \n{P_{L_n}x}.
\]
The equality in the line above follows because
(\ref{eqn_norm_attainment}) holds with $P_{L_n}x$ and $L_n$ in place
of $x$ and $L$, respectively. Note that
\[
\pn{P_{L_n} x}{\infty} \;<\; 2\sum_{j \in L_n} q_j|x(j)|,
\]
whenever $n \geqslant n_0$. Since $|L_n|=n$, this completes the proof of the fact.

Now let $(a_n)_{n=1}^\infty$ be a sequence of positive numbers tending
to $0$. Choose integers $0=n_0 < n_1 < n_2 < \dots$, such that
\begin{enumerate}
\item[(a)] $a_n \leqslant 8^{-k}$ whenever $n \geqslant n_k$, and 
\item[(b)] $n_k-n_{k-1} \leqslant n_{k+1}-n_k$ for all $k \in \N$.
\end{enumerate}
Since $q^{-1}(d)$ is infinite for all $d \in D$, it is possible to
find finite sets $H_k \subseteq \N$ such that
\begin{enumerate}
\item[(c)] $\max H_k < \min H_{k+1}$,
\item[(d)] $|H_k|=n_k-n_{k-1}$ and
\item[(e)] $q_j =2^{-k}/|H_k|$ for all $j \in H_k$.
\end{enumerate}
Define $L=\bigcup_{k=1}^\infty H_k$. We have
\[
\sum_{j \in L} q_j \;=\; \sum_{k=1}^\infty \sum_{j \in H_k}
\frac{2^{-k}}{|H_k|} \;=\; \sum_{k=1}^\infty 2^{-k} \;=\; 1.
\]
Together with (b) -- (e) above, this ensures that $L \in S$. Now define $x \in c_0$ by
\[
x(j) \;=\; \begin{cases}\frac{3}{2}\cdot 2^{-k} & \text{whenever }j
  \in H_k, \\ 0 & \text{otherwise.}
\end{cases}
\]
Then $|x|=x$ is non-increasing on its support, which equals $L$, and
\[
2\sum_{j \in L} q_j |x(j)| \;=\; 2\sum_{k=1}^\infty \sum_{j \in K_k}
\frac{2^{-k}}{|H_k|}\cdot {\ts\frac{3}{2}}\cdot 2^{-k} \;=\;
3\sum_{k=1}^\infty 4^{-k} \;=\; 1 \;>\; {\ts \frac{3}{4}} \;=\; |x(1)|
\;=\; \pn{x}{\infty}.
\]
We make the simple observation that
\[
2\sum_{j \in L_{n_2}} q_j|x(j)| \;=\; 3\left(\sum_{j \in H_1}
  \frac{4^{-1}}{|H_1|} + \sum_{j \in H_2} \frac{4^{-2}}{|H_2|} \right)
\;=\; 3({\ts \frac{1}{4} + \frac{1}{16}}) \;>\; \pn{x}{\infty}.
\]
Therefore, using equation (\ref{eqn_non-sym_1}), given $n \geqslant n_2$, we have
\[
\n{x}-\sup_{|A| \leqslant n} \n{P_A x} \;=\; 2\sum_{j \in L\setminus L_n} q_j x(j).
\]
Given $n \geqslant n_2$, let $k \geqslant 2$ such that $n_k \leqslant
n < n_{k+1}$. Then
\begin{align*}
\n{x}-\sup_{|A| \leqslant n} \n{P_A x} \;=\; 2\sum_{j \in L\setminus
  L_n} q_j x(j)\;&\geqslant\; 2\sum_{j \in  L\setminus L_{n_{k+1}}}
                   q_j x(j)\\
&=\; 2\sum_{\ell=k+2}^\infty \sum_{j \in H_\ell} q_j x(j) \;=\;
  2\sum_{\ell=k+2}^\infty {\ts \frac{3}{2}\cdot 4^{-\ell}} \;=\;
  4^{-k-2}.
\end{align*}
Combining this with (a) above yields
\[
a_n^{-1}\bigg(\n{x}-\sup_{|A|\leqslant n} \n{P_A x} \bigg)
\;\geqslant\; 8^k 4^{-k-2} \;=\; 4^{k-2} \;\to\; \infty,
\]
as $n \to \infty$.
\end{proof}

We do not know if the norm in Example \ref{ex_non-sym} can be replaced
by one that is symmetric.

\begin{prob} Let $X=(c_0,\ndot)$, where $\ndot$ is a symmetric
  equivalent norm. Does there exist a sequence $(a_n)_{n=1}^\infty$ of
  positive numbers tending to $0$, such that (\ref{eqn_unc_est}) holds
  for all $x \in X$?
\end{prob}

\section{Decompositions of Banach spaces having a Markushevich basis}\label{sect_M-basis}

Let $(e_\gamma,e^*_\gamma)_{\gamma \in \Gamma}$ be a strong normalized
{\em Markushevich basis} (M-basis for short),
i.e.~$e^*_\beta(e_\gamma)=\delta_{\beta \gamma}$ for all $\beta,\gamma
\in \Gamma$, $\n{e_\gamma}=1$ for all $\gamma \in \Gamma$,
$\cl{\lspan}^{w*}(e^*_\gamma)=X^*$ and
\[
x \in \cl{\lspan}^{\ndot}\set{e_\gamma}{\gamma \in \supp(x)}.
\]
The next result is the main tool we use to prove Theorem \ref{thm_lim_inf}.

\begin{prop}[{\cite[Corollary 14]{fpst:14}}]\label{prop_fpst}
Let $X$ have a strong M-basis $(e_\gamma,e^*_\gamma)_{\gamma \in
  \Gamma}$ and suppose that we can write
\[
S_X = \bigcup_{k=1}^\infty S_k,
\]
and find a sequence of positive integers $n_k$ in such a way that the sequence
\[
b_k \;:=\; \inf_{x \in S_k}\sup\set{f(x)}{f \in S_{X^*} \text{ and }|\supp(f)| \leqslant n_k},
\]
is strictly positive and converges to $1$. Then $X$ admits a
polyhedral renorming. Moreover, if $\Gamma=\N$ and the sequence
\[
c_k \;:=\; \inf_{x \in S_k}\sup\set{f(x)}{f \in S_{X^*} \text{ and
  }\max(\supp(f)) \leqslant n_k},
\]
behaves likewise, then we reach the same conclusion.
\end{prop}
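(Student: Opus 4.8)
The plan is to produce an equivalent norm $\tndot$ on $X$ and to establish its polyhedrality through the Fonf-type criterion for isomorphic polyhedrality: it suffices to exhibit a bounded norming family $M\subseteq X^*$ for $\tndot$ such that, for every $x$ with $\tn{x}=1$, the near-maximising functionals $\set{f\in M}{f(x)\geqslant 1-\delta}$ are, for some $\delta>0$, confined to an effectively finite-dimensional, gap-separated subfamily of $M$. The two halves of the statement use the same skeleton but differ sharply at the verification of this separation condition, and that verification is where essentially all the difficulty sits.

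Before constructing anything, I would tidy the data. Refining the decomposition by replacing $S_k$ with $S_k\setminus\bigcup_{j<k}S_j$, deleting empty pieces and passing to a subsequence of the $(n_k)$, I may assume that $(n_k)$ is strictly increasing and that $0<b_1\leqslant b_2\leqslant\cdots$ with $b_k\to1$ (and likewise for $(c_k)$ in the second part). The building blocks are the families $\mathcal F_k=\set{f\in B_{X^*}}{|\supp(f)|\leqslant n_k}$, which increase with $k$; the hypothesis on $b_k$ says precisely that on the $k$-th piece these low-support functionals already norm up to the factor $b_k\to1$.

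Out of these blocks I would assemble the norming family $M$. The naive choice of simply letting every functional in $\bigcup_k\mathcal F_k$ act does nothing to separate the levels and cannot be expected to produce polyhedrality, so the levels must be genuinely decoupled rather than superposed. The correct assembly reserves, at level $k$, only those functionals needed to norm the points of $S_k$, recording them at a scale that decouples the levels, so that a functional admitted at a high level cannot come close to norming a point already well normed at a low level. One then defines $\tndot$ from this family and checks, using $b_k\to1$ on the positive side and $\n{f}\leqslant 1$ on the negative side, that $c\,\n{x}\leqslant\tn{x}\leqslant\n{x}$ for some constant $c>0$, so that $\tndot$ is equivalent to the original norm.

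The crux is the separation condition, and here the two cases part company. When $\Gamma=\N$ and we use $\max(\supp(f))\leqslant n_k$, each level lives in the fixed finite-dimensional space $\lspan\set{e^*_j}{1\leqslant j\leqslant n_k}$, so the level families are norm compact and their union is $\sigma$-compact; in this situation I would route the argument through the theorem that a space with a $\sigma$-compact boundary is isomorphically polyhedral \cite{f:89,h:95}, the only work being to arrange that the assembled family is a genuine boundary of $\tndot$ rather than merely norming. For general, possibly uncountable $\Gamma$ and the cardinality condition $|\supp(f)|\leqslant n_k$, no such compactness is available: the supports of the level-$k$ functionals may roam over all of $\Gamma$, and it is here that the full strength of the strong M-basis is needed. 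The key estimate to establish is that a functional of support cardinality at most $n_k$ which nearly norms a point $x$ must concentrate on the finitely many largest coordinates of $x$ --- enumerated as in Remark \ref{rem_A_n(x)} --- so that, outside a finite coordinate block depending only on $x$, every admissible functional falls below $\tn{x}$ by a fixed amount. Turning this heuristic into the required uniform gap, while controlling the interaction between the countably many levels, is the main obstacle, and is precisely the content that the cited corollary packages.
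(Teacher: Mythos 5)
You should note first that the paper contains no internal proof of this proposition to compare against: it is imported verbatim from \cite[Corollary 14]{fpst:14}. Measured as a standalone proof, your proposal has a genuine gap, and you concede it yourself in the final sentence (``\ldots is precisely the content that the cited corollary packages''). Concretely: the norming family is never defined --- ``recording them at a scale that decouples the levels'' names no scales; the norm $\tndot$ is never written down; the equivalence $c\,\n{x}\leqslant\tn{x}\leqslant\n{x}$ is asserted, not derived; and the Fonf-type gap condition, which you correctly identify as the crux, is never verified. Even in the case $\Gamma=\N$, which you dismiss as ``the only work being to arrange that the assembled family is a genuine boundary,'' the obstruction is exactly attainment: if one damps the levels by weights $1-\varepsilon_k$ with $\varepsilon_k\to 0$, then for $x\in S_j$ the level-$j$ value is at most $(1-\varepsilon_j)<1$, while the values at levels $k\to\infty$ can creep up towards $\n{x}\cdot\lim_k(1-\varepsilon_k)=\n{x}$, so the supremum defining $\tn{x}$ need not be attained at any finite level; choosing the $\varepsilon_k$ so that attainment does hold requires exploiting the quantitative relation between $(b_k)$ (or $(c_k)$) and the decomposition, which is the substance of the result, not bookkeeping.

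The heuristic you propose for general $\Gamma$ is also unsound as stated: the claim that a functional with $|\supp(f)|\leqslant n_k$ which nearly norms $x$ must ``concentrate on the finitely many largest coordinates of $x$'' has no basis for a mere strong M-basis. Remark \ref{rem_A_n(x)} lets one order the coordinates of $x$ by size, but without unconditionality there is no way to convert coordinate size into norm contribution, so no such concentration estimate is available. The mechanism that actually makes the cardinality condition tractable is different: the sets $\set{f\in B_{X^*}}{|\supp(f)|\leqslant n_k}$ are $w^*$-compact, because a $w^*$-limit of functionals each supported on at most $n_k$ basis vectors is again supported on at most $n_k$ of them (were the limit nonzero on $n_k+1$ vectors $e_{\gamma_1},\dots,e_{\gamma_{n_k+1}}$, some member of the net would be nonzero on all of them simultaneously), and it is this $w^*$-compactness of the level sets, fed into the decomposition machinery of \cite{fpst:14}, that produces the polyhedral renorming. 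In short, your proposal is a sensible map of the terrain --- the level decomposition, the need to decouple levels, the $\sigma$-compactness available when $\Gamma=\N$ via \cite{f:89,h:95} --- but neither half of the statement is actually proved, and the step you defer is the theorem.
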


\begin{prob} Let $X=(c_0,\ndot)$ be as in Example
  \ref{ex_non-sym}. Does $S_X$ satisfy the hypothesis of Proposition
  \ref{prop_fpst}?
\end{prob}

\begin{prop}\label{prop_M-basis} Let $X$ be a Banach space with a
  strong M-basis $(e_\gamma,e^*_\gamma)_{\gamma \in \Gamma}$. Let
  $(a_n)_{n=1}^\infty$ be a sequence of positive numbers tending to
  $0$, such that
\begin{equation}\label{eqn_M-basis_1}
\lim\inf_{n\to\infty} a_n^{-1}\big(\n{x}-\sup\set{f(x)}{f \in S_{X^*}
  \text{ and }|\supp(f)|\leqslant n} \big) \;<\; \infty,
\end{equation}
for all $x \in X$. Then $X$ admits an equivalent polyhedral norm. If
$\Gamma=\N$ and
\begin{equation}\label{eqn_M-basis_2}
\lim\inf_{n\to\infty} a_n^{-1}\big(\n{x}-\sup\set{f(x)}{f \in S_{X^*}
  \text{ and }\max(\supp(f))\leqslant n} \big) \;<\; \infty,
\end{equation}
then we reach the same conclusion.
\end{prop}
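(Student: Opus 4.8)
The plan is to deduce the result directly from Proposition \ref{prop_fpst} by exhibiting a suitable countable decomposition of $S_X$. Write $\varphi_n(x) = \sup\set{f(x)}{f \in S_{X^*} \text{ and } |\supp(f)| \leqslant n}$, so that condition (\ref{eqn_M-basis_1}) reads $g(x) := \liminf_{n} a_n^{-1}(\n{x} - \varphi_n(x)) < \infty$ for all $x$. Two elementary observations get us started. First, $\varphi_n$ is non-decreasing in $n$, so $\n{x} - \varphi_n(x)$ decreases to some limit $L \geqslant 0$; were $L > 0$, then $a_n^{-1}(\n{x}-\varphi_n(x)) \geqslant L a_n^{-1} \to \infty$, contradicting finiteness of $g(x)$. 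Hence $\varphi_n(x) \to \n{x}$ for every $x$. Second, since the quantity $b_k$ in Proposition \ref{prop_fpst} is precisely $\inf_{x \in S_k} \varphi_{n_k}(x)$, to obtain $b_k \to 1$ it suffices to build pieces $S_k \subseteq S_X$ and integers $n_k$ with $\sup_{x \in S_k}(1 - \varphi_{n_k}(x)) \to 0$.

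For the decomposition, fix for each $m \in \N$ an integer $T_m$ large enough that $a_n \leqslant \frac{1}{2m^2}$ for all $n \geqslant T_m$ (possible since $a_n \to 0$), and set
\[
V_{m,n} = \set{x \in S_X}{\n{x} - \varphi_n(x) \leqslant m\, a_n},
\]
taking as our pieces the nonempty sets $V_{m,n}$ with $n \geqslant T_m$, each paired with the integer $n$. Coverage of $S_X$ comes from the liminf hypothesis: given $x \in S_X$, choose an integer $m > g(x)$; then $a_n^{-1}(1 - \varphi_n(x)) < m$ for infinitely many $n$, and infinitely many of these exceed $T_m$, so $x \in V_{m,n}$ for some admissible pair. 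By construction every admissible pair satisfies $\inf_{x \in V_{m,n}} \varphi_n(x) \geqslant 1 - m a_n \geqslant 1 - \frac{1}{2m} \geqslant \frac{1}{2}$, which gives strict positivity.

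The crux is arranging $b_k \to 1$ under a single enumeration, and this is the step I expect to be the main obstacle: naively covering each level set $\{g \leqslant m\}$ forces cofinitely many scales $n$ per level, and the resulting bound $1 - b_{m,n} \leqslant m a_n$ can be as large as $\frac{1}{m}$, which fails to improve across levels. The fast-growing threshold $T_m$ resolves this. Indeed, for any $\varepsilon > 0$, if $m \geqslant 1/\varepsilon$ then $m a_n \leqslant \frac{1}{2m} < \varepsilon$ for all $n \geqslant T_m$, so every admissible pair with $m a_n > \varepsilon$ has $m < 1/\varepsilon$; for each of these finitely many $m$, only finitely many $n$ satisfy $a_n > \varepsilon/m$. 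Thus $\set{(m,n)}{n \geqslant T_m \text{ and } m a_n > \varepsilon}$ is finite for every $\varepsilon$, so in any enumeration $(m_k, n_k)$ of the admissible pairs we have $1 - b_k \leqslant m_k a_{n_k} \to 0$, that is, $b_k \to 1$. Proposition \ref{prop_fpst} then yields the polyhedral renorming.

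For the second assertion, assuming $\Gamma = \N$ and (\ref{eqn_M-basis_2}), I would repeat the argument verbatim with $\varphi_n$ replaced by $\psi_n(x) = \sup\set{f(x)}{f \in S_{X^*} \text{ and } \max(\supp(f)) \leqslant n}$, which is again non-decreasing in $n$, and invoke the ``moreover'' clause of Proposition \ref{prop_fpst} concerning the sequence $c_k$.
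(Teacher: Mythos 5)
Your proposal is correct and takes essentially the same route as the paper's proof: both feed Proposition \ref{prop_fpst} a countable cover of $S_X$ by pieces on which the finite-support approximation defect is at most $m a_n$, with a constant-dependent threshold on the scale $n$ forcing $m a_n \to 0$ along the enumeration. The paper implements this coupling by making $(a_n)$ non-increasing, fixing a subsequence with $k a_{n_k} \to 0$, and partitioning $S_X$ according to the scale $\ell(x) \geqslant n_{m(x)}$ at which the liminf bound is realized, which is precisely the role your thresholds $T_m$ (giving $m a_n \leqslant \frac{1}{2m}$ for $n \geqslant T_m$, hence finiteness of $\set{(m,n)}{m a_n > \varepsilon}$) play, so the differences are only bookkeeping.
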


\begin{proof} We consider the first case. Without loss of generality,
  we may assume that the sequence $(a_n)_{n=1}^\infty$ is
  non-increasing (if necessary, we can replace $a_n$ by $a'_n:=\max_{j
    \geqslant n} a_j$ -- clearly (\ref{eqn_M-basis_1}) holds with
  respect to the $a'_n$). There exists an increasing sequence of
  positive integers $(n_k)_{k=1}^\infty$, such that the sequence
  $(ka_{n_k})_{k=1}^\infty$ tends to $0$ and $\max_{k \geqslant 1}
  ka_{n_k} < 1$. From (\ref{eqn_M-basis_1}) it follows that, for every
  $x \in S_X$, there exist positive integers $m(x)$ and $\ell(x)
  \geqslant n_{m(x)}$ such that
\begin{equation}\label{eqn_M-basis_3}
1 \;\leqslant\; \sup\set{f(x)}{f \in S_{X^*} \text{ and
  }|\supp(f)|\leqslant n_{\ell(x)}} + m(x)a_{\ell(x)}.
\end{equation}
Given $k \in \N$, set
\[
S_k \;=\; \set{x \in S_X}{n_k \leqslant \ell(x) < n_{k+1}},
\]
and let $K=\set{k \in \N}{S_k \text{ is non-empty}}$. Clearly, $S_X
\;=\; \bigcup_{k \in K} S_k$.

Let $k \in K$ and $x \in S_k$. We have $\max\{n_k,n_{m(x)}\} \leqslant
\ell(x) < n_{k+1}$. Since $(n_k)_{k=1}^\infty$ is increasing and
$(a_k)_{k=1}^\infty$ is non-increasing, we get $m(x)\leqslant k$ and
$a_{\ell(x)} \leqslant a_{n_k}$. Using (\ref{eqn_M-basis_3}) we get
\[
1 \;\leqslant\; \sup\set{f(x)}{f \in S_{X^*} \text{ and
  }|\supp(f)|\leqslant n_k} + ka_{n_k}.
\]
Given $k \in K$, define
\[
b_k \;=\; \inf_{x \in S_k}\sup\set{f(x)}{f \in S_{X^*} \text{ and }|\supp(f)|\leqslant n_k}.
\]
We obtain $0<1-ka_{n_k} \leqslant b_k \leqslant 1$. Enumerate $K$ as
an increasing sequence of positive integers
$(k_j)_{j=1}^\infty$. Clearly $(S_{k_j})_{j=1}^\infty$ and
$(b_{k_j})_{j=1}^\infty$ satisfy the hypothesis of Proposition
\ref{prop_M-basis}.

When $\Gamma=\N$, we repeat the proof above, using
(\ref{eqn_M-basis_2}), replacing $|\supp(f)|$ by $\max(\supp(f))$ as
we go, and using the second part of Proposition \ref{prop_fpst}.
\end{proof}

\begin{proof}[Proof of Theorem \ref{thm_lim_inf}] First let us show
  how to treat the case when the basis $(e_\gamma)_{\gamma \in
    \Gamma}$ is unconditional. Given $x \in X$ and $\alpha \in
  [-1,1]^\Gamma$, we set
\[
\psi(x,\alpha) \;=\; \n{\sum_{\gamma \in \Gamma}
  \alpha(\gamma)e_\gamma^*(x)e_\gamma}.
\]
Introduce on $X$ an equivalent norm by the formula
\[
\tn{x} \;=\; \sup\set{\psi(x,\alpha)}{\alpha \in [-1,1]^\Gamma}.
\]
Let (\ref{eqn_unc_est}) hold. We show that, for every $x \in X$,
(\ref{eqn_M-basis_1}) holds with respect to $\tndot$.

Let $x \in X$. Since the function $\psi$ is a continuous with respect
to its second argument, and as $[-1,1]^\Gamma$ is compact, we find
that $\psi(x,\cdot)$ attains its maximum at some $\beta \in
[-1,1]^\Gamma$, i.e.~$\tn{x}=\psi(x,\beta)$. Set $y=\sum_{\gamma \in
  \Gamma}\beta(\gamma)e_\gamma^*(x) e_\gamma$.  From the definition of
$\tndot$, we know that
\[
\n{P_A y} \;\leqslant\; \tn{P_A y} \;=\; \tn{P_A x},
\]
for every $A \subseteq \Gamma$. Hence
\[
\sup_{|A|\leqslant n} \n{P_A y} \;\leqslant\; \sup_{|A|\leqslant n} \tn{P_A x}.
\]
Since $\tn{x}=\n{y}$, we get
\[
\tn{x}-\sup_{|A|\leqslant n} \tn{P_A x} \;\leqslant\; \n{y} - \sup_{|A|\leqslant n} \n{P_A y}.
\]
Bearing in mind that (\ref{eqn_unc_est}) holds for $y$, we have
\begin{equation}\label{eqn_unc_est_1}
\lim\inf_{n\to\infty} a_n^{-1}\big(\tn{x}-\sup_{|A|\leqslant n} \tn{P_A x} \big) \;<\; \infty.
\end{equation}
Since the basis is unconditionally monotone with respect to $\tndot$, we obtain
\[
\tn{P_A x} \;=\; \sup\set{f(x)}{\tn{f}=1 \text{ and }\supp(f)=A}.
\]
This, together with (\ref{eqn_unc_est_1}), shows that
(\ref{eqn_M-basis_1}) holds with respect to $\tndot$. Thus we can
apply Proposition \ref{prop_M-basis}.

In the Schauder basis case, we proceed much as above, using the
equivalent norm $\tn{x}=\sup_n\n{P_n x}$. First, we show that
(\ref{eqn_M-basis_2}) holds with respect to $\tndot$. Let $x \in
X$. If $\tn{x}$ should happen to equal $\n{x}$, we have
\begin{equation}\label{eqn_unc_est_2}
\tn{x}-\tn{P_n x} \;\leqslant\; \n{x}-\n{P_n x},
\end{equation}
for all $n \in \N$. Assume now that $\tn{x}>\n{x}$. Since
$\n{x}=\lim_{n \to \infty} \n{P_n x}$, we have $\tn{x}=\n{P_m x}$ for
some $m \in \N$. Since the basis $(e_j)_{j=1}^\infty$ is monotone with
respect to $\tndot$ we have $\tn{P_n x}=\tn{P_m x}$ whenever $n
\geqslant m$. Thus in this case
\[
\tn{x}-\tn{P_n x} \;=\; 0.
\]
Together with (\ref{eqn_unc_est_2}), this implies that
(\ref{eqn_Sch_est}) holds with respect to $\tndot$. Again, given that
the basis is monotone with respect to $\tndot$, we find that
\[
\tn{P_n x} \;=\; \sup\set{f(x)}{f \in X^*,\;\tn{f}\leqslant 1 \text{
    and }\max(\supp(f)) \leqslant n}.
\]
So (\ref{eqn_M-basis_2}) holds with respect to $\tndot$ and we are in
a position to apply Proposition \ref{prop_M-basis} once more.
\end{proof}

Using Fact \ref{fact_sequences}, the hypotheses of Theorem 1.3 can be relaxed a little. 

\begin{cor}\label{cor_inf_lim_inf} Let $X$ be a Banach space having an
  unconditional basis $(e_\gamma)_{\gamma \in \Gamma}$. Given $m \in
  \N$, let $(a_{m,n})_{n=1}^\infty$ be a sequence of positive numbers
  such that $\lim_{n\to\infty} a_{m,n}=0$ and, for every $x \in X$,
\[
\inf_{m \in \N} \left( \lim \inf_{n \to \infty}
  a_{m,n}^{-1}\left(\n{x}-\sup_{|A| \leqslant n} \n{P_A x}
  \right)\right) \;<\; \infty.
\]
Then $X$ admits a polyhedral renorming.
\end{cor}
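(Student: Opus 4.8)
The plan is to \emph{collapse} the countable family of sequences $(a_{m,n})_{n}$ into a single sequence $(a_n)_{n}$ and then to invoke Theorem~\ref{thm_lim_inf} directly: once I know that condition (\ref{eqn_unc_est}) holds with respect to $(a_n)_n$, the conclusion is immediate. Fact~\ref{fact_sequences} is tailor-made for this collapsing step, so the whole proof is essentially a reduction.

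First I would set
\[
a_n \;:=\; \sum_{m=1}^\infty 2^{-m}\frac{a_{m,n}}{1+a_{m,n}},
\]
exactly as in (\ref{eqn_sequence}). Fact~\ref{fact_sequences} then guarantees both that $a_n \to 0$ and that $a_{m,n} \leqslant C_m a_n$ for all $m,n$, where $C_m := 2^m\max_{k\in\N}(a_{m,k}+1) \geqslant 1$ depends only on $m$. Equivalently, $a_n^{-1} \leqslant C_m a_{m,n}^{-1}$.

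Next, fix $x \in X$ and abbreviate $\rho_n := \n{x} - \sup_{|A|\leqslant n}\n{P_A x}$. The hypothesis supplies an index $m=m(x)$ with $\lim\inf_{n}a_{m,n}^{-1}\rho_n < \infty$; this $m$ may vary with $x$, but that is harmless, since the \emph{single} sequence $(a_n)_n$ is the one fed to Theorem~\ref{thm_lim_inf} and it serves every $x$. For this $m$, the bound $a_n^{-1}\leqslant C_m a_{m,n}^{-1}$ yields the pointwise estimate
\[
a_n^{-1}\rho_n \;\leqslant\; C_m\max\{a_{m,n}^{-1}\rho_n,\,0\},
\]
valid for every $n$: when $\rho_n \geqslant 0$ it follows from $a_n^{-1}\leqslant C_m a_{m,n}^{-1}$, and when $\rho_n < 0$ the left-hand side is already negative. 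Passing to the lower limit and using that $t\mapsto\max\{t,0\}$ is continuous and non-decreasing (so that $\lim\inf_n f(b_n)\leqslant f(\lim\inf_n b_n)$ for such $f$, as seen by testing along a subsequence realising the lower limit of $(b_n)$), I obtain
\[
\lim\inf_{n}a_n^{-1}\rho_n \;\leqslant\; C_m\max\Big\{\lim\inf_{n}a_{m,n}^{-1}\rho_n,\,0\Big\} \;<\; \infty.
\]
Since $x$ was arbitrary, condition (\ref{eqn_unc_est}) holds with respect to $(a_n)_n$, and Theorem~\ref{thm_lim_inf} produces the desired polyhedral renorming.

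The substance of the argument is entirely contained in Fact~\ref{fact_sequences}; the only points requiring care are purely formal, and I expect no serious obstacle beyond this bookkeeping. The two things to watch are the dependence of $m$ on $x$ (which does not obstruct anything, precisely because the combined sequence $(a_n)_n$ is uniform in $x$) and the sign of $\rho_n$, which I neutralise with the positive-part truncation above so that negative values of $\rho_n$ only help the required upper bound on the lower limit.
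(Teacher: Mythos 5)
Your proof is correct and is essentially the paper's own argument: the paper leaves Corollary~\ref{cor_inf_lim_inf} without an explicit proof, indicating only that it follows by ``using Fact~\ref{fact_sequences}'' to relax the hypotheses of Theorem~\ref{thm_lim_inf}, which is precisely your reduction via the combined sequence (\ref{eqn_sequence}) and the bound $a_{m,n}\leqslant 2^m a_n\max_k(a_{m,k}+1)$. Your extra care with the possible negativity of $\n{x}-\sup_{|A|\leqslant n}\n{P_A x}$ (the basis need not be $1$-unconditional) is a sound piece of bookkeeping that the paper silently elides.
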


At last, we are ready to present the proof of Theorem
\ref{thm_symmetric}. Let $(e_\gamma)_{\gamma \in \Gamma}$ be a
normalized unconditional basis of a Banach space $X$. Set
\[
\lambda_n \;=\; \inf\set{\n{\sum_{\gamma \in A} e_\gamma}}{A \subseteq
  \Gamma,\; |A| \geqslant n}.
\]
Assume that $(e_\gamma)_{\gamma \in \Gamma}$ is a symmetric
basis. Then $X$ is isomorphic to $c_0(\Gamma)$ if and only if the
sequence $(\lambda_n)_{n=1}^\infty$ is bounded (this follows
immediately from the fact that, given a normalized basis
$(e_\gamma)_{\gamma \in \Gamma}$ of a Banach space having
unconditional basis constant $K$, we have
\[
K^{-1}\max_{\gamma \in A} |a_\gamma| \;\leqslant\; \n{\sum_{\gamma \in
    A} a_\gamma e_\gamma} \;\leqslant\; K\max_{\gamma \in A}
|a_\gamma|\n{\sum_{\gamma \in A} e_\gamma}.
\]
for every finite set $A \subseteq \Gamma$ and reals $a_\gamma$,
$\gamma \in A$). Since $c_0(\Gamma)$ is polyhedral, Theorem
\ref{thm_symmetric} follows immediately from the final result of the
paper.

\begin{prop}\label{prop_lim_lambda_infty} Let $X$ have {\st} with
  respect to an unconditional basis $(e_\gamma)_{\gamma \in \Gamma}$
  and some modulus $\omega$. If 
\begin{equation}\label{eqn_lim_lambda_infty}
\lim_{n\to\infty} \lambda_n \;=\; \infty,
\end{equation}
then $X$ admits a polyhedral renorming.
\end{prop}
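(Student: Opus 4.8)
The plan is to reduce everything to Corollary~\ref{cor_inf_lim_inf} by exhibiting a suitable doubly-indexed family of sequences. Write $K$ for the unconditional basis constant of $(e_\gamma)_{\gamma\in\Gamma}$, so that $\n{P_A x}\leqslant K\n{x}$ for every $A\subseteq\Gamma$, and $\n{\sum_\gamma b_\gamma e_\gamma}\leqslant K\n{\sum_\gamma a_\gamma e_\gamma}$ whenever $|b_\gamma|\leqslant|a_\gamma|$ for all $\gamma$. For non-zero $x$, let $A_n(x)=\{\gamma_1,\dots,\gamma_n\}$ be as in Remark~\ref{rem_A_n(x)}, so that $t_n:=|e^*_{\gamma_n}(x)|$ is the $n$th largest of the numbers $|e^*_\gamma(x)|$ and $\pn{R_{A_n(x)}x}{\infty}=t_{n+1}\leqslant t_n$.

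First I would establish the tail estimate
\[
\pn{R_{A_n(x)}x}{\infty} \;\leqslant\; \frac{K^2\n{x}}{\lambda_n}, \qquad x\in X,\ n\in\N.
\]
Indeed, each of the $n$ coefficients indexed by $A_n(x)$ has modulus at least $t_n$, so the vector $t_n\sum_{k=1}^n e_{\gamma_k}$ is dominated in modulus, coordinate by coordinate, by $P_{A_n(x)}x$. Hence
\[
t_n\lambda_n \;\leqslant\; t_n\n{\ts\sum_{k=1}^n e_{\gamma_k}} \;=\; \n{\ts t_n\sum_{k=1}^n e_{\gamma_k}} \;\leqslant\; K\n{P_{A_n(x)}x} \;\leqslant\; K^2\n{x},
\]
where the first inequality is the definition of $\lambda_n$. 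As $\pn{R_{A_n(x)}x}{\infty}=t_{n+1}\leqslant t_n$, the estimate follows; in particular, since $\lambda_n\to\infty$, these tails tend to $0$ for each fixed $x$.

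Then I would feed this into property~\st. Applying \st with $A=A_n(x)$, and noting $\sup_{|A|\leqslant n}\n{P_A x}\geqslant\n{P_{A_n(x)}x}$ because $|A_n(x)|\leqslant n$, I obtain
\[
\n{x}-\sup_{|A|\leqslant n}\n{P_A x} \;\leqslant\; \n{x}-\n{P_{A_n(x)}x} \;\leqslant\; c(x)\,\omega\!\big(d(x)K^2\n{x}/\lambda_n\big).
\]
Now set $a_{m,n}=\omega(m/\lambda_n)+\lambda_n^{-1}$ for $m,n\in\N$, the term $\lambda_n^{-1}$ being present only to keep the $a_{m,n}$ strictly positive should $\omega$ vanish near $0$. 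For each fixed $m$ we have $a_{m,n}\to0$ as $n\to\infty$, since $m/\lambda_n\to0$ and $\omega$ is continuous with $\omega(0)=0$. Given $x\in X$, choose $m\geqslant d(x)K^2\n{x}$; as $\omega$ is non-decreasing, $\omega(d(x)K^2\n{x}/\lambda_n)\leqslant\omega(m/\lambda_n)\leqslant a_{m,n}$, and so
\[
a_{m,n}^{-1}\Big(\n{x}-\sup_{|A|\leqslant n}\n{P_A x}\Big) \;\leqslant\; c(x), \qquad n\in\N.
\]
Thus $\inf_m\lim\inf_n a_{m,n}^{-1}(\n{x}-\sup_{|A|\leqslant n}\n{P_A x})\leqslant c(x)<\infty$ for every $x$, and Corollary~\ref{cor_inf_lim_inf} delivers the polyhedral renorming.

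The hard part is that neither the growth rate of $(\lambda_n)$ nor the behaviour of $\omega$ near $0$ is under our control, while the constant $d(x)\n{x}$ produced by \st sits \emph{inside} the argument of $\omega$ and varies with $x$. A single sequence such as $a_n=\omega(1/\lambda_n)$ will not do, because $\omega(Ct)/\omega(t)$ can be unbounded as $t\to0$ when $C>1$ (e.g.\ $\omega(t)=e^{-1/t}$). The two-parameter family of Corollary~\ref{cor_inf_lim_inf} is exactly what lets the index $m$ absorb the constant $d(x)K^2\n{x}$ through monotonicity of $\omega$, and this is the crux of the argument; the possibility that $\omega$ vanishes on some interval $[0,\delta]$ is then only a cosmetic issue, handled by the harmless summand $\lambda_n^{-1}$.
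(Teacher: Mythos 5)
Your proof is correct and follows essentially the same route as the paper's: the tail estimate $\pn{R_{A_n(x)}x}{\infty}\leqslant\text{const}\cdot\n{x}\lambda_n^{-1}$ (the paper gets $K$ where you get $K^2$, immaterial), substitution into {\st} with $A=A_n(x)$, and a doubly-indexed family $a_{m,n}$ whose index $m$ absorbs the $x$-dependent constants before invoking Corollary~\ref{cor_inf_lim_inf} (the paper takes $a_{m,n}=m\omega(m\lambda_n^{-1})$, absorbing $c(x)$ into the factor $m$ where you leave $c(x)$ on the right-hand side). Your extra summand $\lambda_n^{-1}$, guaranteeing $a_{m,n}>0$ even if $\omega$ vanishes near $0$, is a small point the paper's choice of $a_{m,n}$ silently glosses over.
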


\begin{proof} Pick $x \in X$ and define the sets $A_n(x)$ as in Remark
\ref{rem_A_n(x)}. Given $n \in \N$, we have
\begin{align*}
\pn{R_{A_n(x)}x}{\infty} \;&=\; \sup\set{|e_\gamma^*(x)|}{\gamma \in
                             \Gamma\setminus A_n(x)}\\
&\leqslant\; |e_{\gamma_n}^*(x)|\\
&\leqslant\; \lambda_n^{-1}\n{\sum_{\gamma \in A_n(x)}
  e_\gamma}\cdot|e_{\gamma_n}^*(x)|\\
&=\; \lambda_n^{-1}\n{\sum_{\gamma \in A_n(x)} e_{\gamma_n}^*(x) e_\gamma}\\
&\leqslant\; \lambda_n^{-1}K\n{\sum_{k\geqslant 1} e_{\gamma_k}^*(x)
  e_{\gamma_k}} \;=\; K\n{x}\lambda_n^{-1},
\end{align*}
where $K$ is the unconditional basis constant of $(e_\gamma)_{\gamma
  \in \Gamma}$. Since $X$ is assumed to have {\st}, it follows that
\begin{align}
\n{x} \;&\leqslant\; \n{P_{A_n(x)} x} +
          c(x)\omega(d(x)\pn{R_{A_n(x)}x}{\infty})\nonumber\\
&\leqslant\; \n{P_{A_n(x)} x} + c(x)\omega(Kd(x)\n{x}\lambda_n^{-1}) \nonumber\\
&\leqslant\; \sup_{|A| \leqslant n} \n{P_A x} +
  c(x)\omega(Kd(x)\n{x}\lambda_n^{-1}).\label{eqn_lim_lambda_infty_1}
\end{align}
Set $a_{m,n}=m\omega(m\lambda_n^{-1})$. Given
(\ref{eqn_lim_lambda_infty}), we see that $\lim_{n\to\infty}
a_{m,n}=0$ for all $m \in \N$. From (\ref{eqn_lim_lambda_infty_1}), it
follows that
\[
\n{x} \;\leqslant\; \sup_{|A|\leqslant n} \n{P_A x} + a_{m,n},
\]
for all $n \in \N$, provided $m \geqslant
\max\{c(x),Kd(x)\n{x}\}$. Now we are in a position to apply Corollary
\ref{cor_inf_lim_inf}. The proof is complete.
\end{proof}

\newpage
\end{document}